\providecommand{\U}[1]{\protect \rule{.1in}{.1in}}
\newtheorem{theorem}{Theorem}
\newtheorem{corollary}[theorem]{Corollary}
\newtheorem{example}{Example}
\newtheorem{proposition}[theorem]{Proposition}
\newtheorem{remark}[theorem]{Remark}
\newenvironment{proof}[1][Proof]{\noindent \textbf{#1.} }{\  \rule[]{0.5em}{0.5em}}
\begin{document}

\begin{center}
{\LARGE Note on the real rootedness of polynomials}

\  \ 

{\Large Abdelkader Benyattou and Miloud Mihoubi} \  \\[0pt]
\  \  \\[0pt]
USTHB, Faculty of Mathematics, RECITS Laboratory, Algiers, Algeria.

abdelkaderbenyattou@gmail.com \  \  \ mmihoubi@usthb.dz

\  \  \  \  \ 
\end{center}

\noindent \textbf{Abstract.} In this paper, by the generalized Bell umbra
and Rolle's theorem, we give some results on the real rootedness of
polynomials. Some applications on partition polynomials and the sigma
polynomials of graphs are given. 

\noindent \textbf{Keywords.} Polynomials with real zeros, Generalized Bell
umbra, Partition polynomials, Exponential polynomials.

\noindent \textbf{2010 MSC.} 11B73, 30C15.

\section{Introduction}

The real rootedness of polynomials has attracted researchers great interest.
One of the reasons is that any polynomial of real zeros implies the
log-concavity and the unimodality of its coefficients, which appear in
various fields of mathematics, see \cite{bre,sta}. In this paper, we
investigate the properties of the generalized Bell umbra and Rolle's theorem
to give a result on the real rootedness of polynomials. Partial and
auxiliary results on partition polynomials and exponential polynomials are
also given. For the partition polynomials, we consider the $\sigma $%
-polynomials of graphs and a class of polynomials linked to the partial $r$%
-Bell polynomials. The mathematical tools used here are the generalized Bell
umbra and Rolle's theorem. To use them, recall that the $n$-th Bell
polynomial $\mathcal{B}_{n}\left( x\right) $ and the $n$-th $r$-Bell
polynomial $\mathcal{B}_{n,r}\left( x\right) $ can be defined by Dobinski's
formula:%
\begin{equation*}
\mathcal{B}_{n}\left( x\right) =e^{-x}\underset{j\geq 0}{\sum }j^{n}\frac{%
x^{j}}{j!},\text{ \ }\mathcal{B}_{n,r}\left( x\right) =e^{-x}\underset{j\geq
0}{\sum }\left( j+r\right) ^{n}\frac{x^{j}}{j!}
\end{equation*}%
and $\mathbf{B}_{\mathbf{x}}^{n}=\mathcal{B}_{n}\left( x\right) $ be the
generalized Bell umbra introduced by Sun et al. \cite{sun}. \newline
For further information about umbral calculus on Bell polynomials, one can
also see \cite{Benyattou,ger,ges}. \newline
To use later, recall that for any polynomial $f$ and integer $n\geq 0,$ it
is known \cite{sun} that 
\begin{equation*}
\left( \mathbf{B}_{\mathbf{x}}\right) _{n}f\left( \mathbf{B}_{\mathbf{x}%
}\right) =x^{n}f\left( \mathbf{B}_{\mathbf{x}}+n\right) .
\end{equation*}%
In particular, we have 
\begin{equation*}
\mathbf{B}_{\mathbf{x}}^{n+1}=x\left( \mathbf{B}_{\mathbf{x}}+1\right) ^{n}%
\text{ \ and \ }\left( \mathbf{B}_{\mathbf{x}}\right) _{n}=x^{n}.
\end{equation*}%
where $\left( x\right) _{n}=x\left( x-1\right) \cdots \left( x-n+1\right) $
if $n\geq 1$ and $\left( x\right) _{0}=1.$\newline
The paper is organized as follows. In the next section we give a result on
polynomials with real zeros. In the third section we present an application
on the $\sigma $-polynomials and another on a class of polynomials linked to
the partial $r$-Bell polynomials \cite{Mihoubi2}. In the last section we
give an application of a class of exponential polynomials.

\section{Polynomials of real zeros via generalized Bell umbra}

Let $\mathbf{RZ}$ be the set of real polynomials having only real zeros. 
\newline
The principal main result of this paper is the following theorem.

\begin{theorem}
\label{T1}Let $r$ be a non-negative integer and let $f$ be a polynomial with
real coefficients such that $f\left( \mathbf{B}_{\mathbf{x}}\right) \in 
\mathbf{RZ}.$ Then, for any non-negative integers $r_{1},\ldots ,r_{q},$
there holds 
\begin{equation*}
\left( \mathbf{B}_{\mathbf{x}}\right) _{r_{p}}\cdots \left( \mathbf{B}_{%
\mathbf{x}}\right) _{r_{1}}f\left( \mathbf{B}_{\mathbf{x}}\right) \in 
\mathbf{RZ}.
\end{equation*}
\end{theorem}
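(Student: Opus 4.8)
The plan is to peel off the falling factorials one at a time and to reduce everything to the classical fact that $P\in\mathbf{RZ}$ implies $P+P'\in\mathbf{RZ}$, which is the place where Rolle's theorem is used.

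\emph{Reduction to one factorial.} Set $g_{0}(t)=f(t)$ and $g_{j}(t)=(t)_{r_{j}}\,g_{j-1}(t)$ for $j\ge 1$; each $g_{j}$ is a genuine polynomial with real coefficients, and by the meaning of the umbral expressions one has $(\mathbf{B}_{\mathbf{x}})_{r_{j}}\cdots(\mathbf{B}_{\mathbf{x}})_{r_{1}}f(\mathbf{B}_{\mathbf{x}})=g_{j}(\mathbf{B}_{\mathbf{x}})$, and, applying the quoted identity $(\mathbf{B}_{\mathbf{x}})_{n}f(\mathbf{B}_{\mathbf{x}})=x^{n}f(\mathbf{B}_{\mathbf{x}}+n)$ with $f$ replaced by $g_{j-1}$ and $n$ by $r_{j}$, $g_{j}(\mathbf{B}_{\mathbf{x}})=(\mathbf{B}_{\mathbf{x}})_{r_{j}}g_{j-1}(\mathbf{B}_{\mathbf{x}})=x^{r_{j}}\,g_{j-1}(\mathbf{B}_{\mathbf{x}}+r_{j})$. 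Hence, arguing by induction on the number $q$ of factorials (the base case $q=0$ being the hypothesis), it suffices to prove the one-step claim: if $g$ is a polynomial with $g(\mathbf{B}_{\mathbf{x}})\in\mathbf{RZ}$, then $(\mathbf{B}_{\mathbf{x}})_{r}\,g(\mathbf{B}_{\mathbf{x}})\in\mathbf{RZ}$ for every non-negative integer $r$.

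\emph{The umbral shift is a differential operator.} Put $P(x)=g(\mathbf{B}_{\mathbf{x}})$ and $D=d/dx$. From the quoted identity, $(\mathbf{B}_{\mathbf{x}})_{r}g(\mathbf{B}_{\mathbf{x}})=x^{r}\,g(\mathbf{B}_{\mathbf{x}}+r)$, so it is enough to establish $g(\mathbf{B}_{\mathbf{x}}+r)=(1+D)^{r}P$ and that $(1+D)^{r}P\in\mathbf{RZ}$; combined with $x^{r}\in\mathbf{RZ}$ this yields the claim, since a product of polynomials with only real zeros has only real zeros. For the identity $g(\mathbf{B}_{\mathbf{x}}+r)=(1+D)^{r}P$, note that $(\mathbf{B}_{\mathbf{x}}+r)^{n}=\mathcal{B}_{n,r}(x)$, and that differentiating Dobinski's formula gives $\mathcal{B}_{n,s+1}(x)=\mathcal{B}_{n,s}(x)+\mathcal{B}_{n,s}'(x)$, i.e. $\mathcal{B}_{n,s+1}=(1+D)\mathcal{B}_{n,s}$; iterating from $\mathcal{B}_{n,0}=\mathcal{B}_{n}$ gives $\mathcal{B}_{n,r}=(1+D)^{r}\mathcal{B}_{n}$, and summing over the monomials of $g$ gives $g(\mathbf{B}_{\mathbf{x}}+r)=(1+D)^{r}P$. (Equivalently, one may derive $g(\mathbf{B}_{\mathbf{x}}+1)=(1+D)P$ directly from $\mathbf{B}_{\mathbf{x}}^{n+1}=x(\mathbf{B}_{\mathbf{x}}+1)^{n}$ and iterate.)

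\emph{Rolle's theorem.} It remains to show that $P\in\mathbf{RZ}$ implies $P+P'\in\mathbf{RZ}$; applying this $r$ times completes the argument. We may assume $P\not\equiv 0$ and let $n=\deg P$, the case $n=0$ being trivial. The function $Q(x)=e^{x}P(x)$ has exactly the zeros of $P$ with the same multiplicities, hence $n$ real zeros counted with multiplicity; and $Q'(x)=e^{x}\bigl(P(x)+P'(x)\bigr)$, so $P+P'$ has the same real zeros as $Q'$. By Rolle's theorem applied on each interval between two consecutive distinct zeros of $Q$, together with the fact that a zero of $Q$ of multiplicity $m\ge 2$ is a zero of $Q'$ of multiplicity at least $m-1$, the polynomial $P+P'$ has at least $n-1$ real zeros counted with multiplicity. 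Since $\deg(P+P')=n$ (the leading term is inherited from $P$, as $P'$ has smaller degree) and its non-real zeros, if any, come in conjugate pairs, all $n$ zeros of $P+P'$ are real, i.e. $P+P'\in\mathbf{RZ}$. The main obstacle is conceptual rather than computational: once one recognizes that the umbral translation $g(\mathbf{B}_{\mathbf{x}}+r)$ is precisely $(1+D)^{r}$ applied to $g(\mathbf{B}_{\mathbf{x}})$, the statement collapses to the classical Rolle-type result about $P+P'$.
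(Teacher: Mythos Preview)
Your proof is correct and follows essentially the same route as the paper: both reduce to a single factorial via the identity $(\mathbf{B}_{\mathbf{x}})_{r}g(\mathbf{B}_{\mathbf{x}})=x^{r}g(\mathbf{B}_{\mathbf{x}}+r)$, both observe that $\tfrac{d}{dx}\bigl(e^{x}g(\mathbf{B}_{\mathbf{x}}+r-1)\bigr)=e^{x}g(\mathbf{B}_{\mathbf{x}}+r)$ (your formulation as $g(\mathbf{B}_{\mathbf{x}}+r)=(1+D)^{r}g(\mathbf{B}_{\mathbf{x}})$ is the same identity), and both then appeal to Rolle's theorem on $e^{x}$ times the relevant polynomial and iterate. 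Your write-up is simply more explicit than the paper's in spelling out the zero-counting for $P\mapsto P+P'$ and the induction on the number of factorials.
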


\begin{proof}
From the identity $f\left( \mathbf{B}_{\mathbf{x}}\right) =e^{-x}\sum_{k\geq
0}f\left( k\right) \frac{x^{k}}{k!}$ \cite{Benyattou} we get%
\begin{equation*}
\frac{d}{dx}\left( e^{x}f\left( \mathbf{B}_{\mathbf{x}}+r-1\right) \right) =%
\frac{d}{dx}\left( \sum_{k\geq 0}f\left( k+r-1\right) \frac{x^{k}}{k!}%
\right) =\sum_{k\geq 0}f\left( k+r\right) \frac{x^{k}}{k!}=e^{x}f\left( 
\mathbf{B}_{\mathbf{x}}+r\right).
\end{equation*}%
The proof can be obtained by induction on $r$ by application of Rolle's
theorem on the function $e^{x}f\left( \mathbf{B}_{\mathbf{x}}+r-1\right)$.
More generally, since $g\left( x\right) : =\left( \mathbf{B}_{\mathbf{x}%
}\right) _{r_{1}}f\left( \mathbf{B}_{\mathbf{x}}\right) \in \mathbf{RZ}$, it
follows $\left( \mathbf{B}_{\mathbf{x}}\right) _{r_{2}}g\left( \mathbf{B}_{%
\mathbf{x}}\right)$ $=\left( \mathbf{B}_{\mathbf{x}}\right) _{r_{2}}\left( 
\mathbf{B}_{\mathbf{x}}\right) _{r_{1}}f\left( \mathbf{B}_{\mathbf{x}%
}\right) \in \mathbf{RZ}$ and so on.
\end{proof}

\begin{example}
Let $f\left( x\right) =x^{n},$ then $f\left( \mathbf{B}_{\mathbf{x}}\right) =%
\mathcal{B}_{n}\left( x\right) $ which has only real zeros. We deduce the
known result \cite{Maamra} (see also \cite{Benyattou,Mihoubi0}): 
\begin{equation*}
\left( \mathbf{B}_{\mathbf{x}}\right) _{r_{p}}\cdots \left( \mathbf{B}_{%
\mathbf{x}}\right) _{r_{1}}\mathbf{B}_{\mathbf{x}}^{n}=x^{max\{r_{1},\ldots
,r_{p}\}}\mathcal{B}_{n;r_{1},\ldots,r_{p}}\left( x\right) \in \mathbf{RZ}.
\end{equation*}
\end{example}

\begin{example}
Let $f\left( x\right) =\left( x\right) _{n},$ then $f\left( \mathbf{B}_{%
\mathbf{x}}\right) =x^{n}\in \mathbf{RZ}$. It follows that the polynomial 
\begin{equation*}
\left( \mathbf{B}_{\mathbf{x}}\right) _{r}f\left( \mathbf{B}_{\mathbf{x}%
}\right) =x^{r}\left( \mathbf{B}_{\mathbf{x}}+r\right) _{n}=x^{r}\underset{%
k=0}{\overset{\min \left( n,r\right) }{\sum }}\binom{n}{k}\frac{r!}{\left(
r-k\right) !}x^{n-k}
\end{equation*}%
is in $\mathbf{RZ}$. Also, the polynomial 
\begin{equation*}
\left( \mathbf{B}_{\mathbf{x}}\right) _{r_{p}}\cdots \left( \mathbf{B}_{%
\mathbf{x}}\right) _{r_{1}}\left( \mathbf{B}_{\mathbf{x}}\right)
_{n}=x^{r_{p}}\underset{k=0}{\overset{n}{\sum }}\left( -1\right) ^{n-k}{n %
\brack k}\mathcal{B}_{k;r_{1},\cdots,r_{p}}\left( x\right)
\end{equation*}%
is in $\mathbf{RZ},$ where ${n \brack k}$ are the unsigned Stirling numbers
of the first kind.
\end{example}

\begin{example}
Let $f\left( x\right) =\left( x+n-1\right) _{n},$ then $f\left( \mathbf{B}_{%
\mathbf{x}}\right) =\left( \mathbf{B}_{\mathbf{x}}+n-1\right) _{n}=\mathcal{L%
}_{n}\left( x\right) $ \cite{Benyattou} is the $n$-th Lah polynomial which
is in $\mathbf{RZ}$. It follows that the polynomial 
\begin{equation*}
\left( \mathbf{B}_{\mathbf{x}}\right) _{r_{p}}\cdots \left( \mathbf{B}_{%
\mathbf{x}}\right) _{r_{1}}\left( \mathbf{B}_{\mathbf{x}}+n-1\right)
_{n}=x^{r_{p}}\underset{k=0}{\overset{n}{\sum }}\left( -1\right) ^{n-k}{2n-1 %
\brack k+n-1}_{n-1}\mathcal{B}_{k;r_{1},\cdots,r_{p}}\left( x\right)
\end{equation*}%
is in $\mathbf{RZ},$ where ${n \brack k}_{r}$ are the unsigned $r$-Stirling
numbers of the first kind.
\end{example}

\section{Partition polynomials with real zeros}

We present in this section two applications of Theorem \ref{T1}: one on the $%
\sigma$-polynomial associated to a graph and another on a class of
polynomials linked to the partial $r$-Bell polynomials. \newline
For the first application, recall that a $\lambda$-coloring of $G,$ $\lambda
\in \mathbb{N},$ is a mapping $f:V\rightarrow \left \{ 1,2,\ldots,\lambda
\right \} $ where $f(u)\neq f(v)$ whenever the vertices $u$ \ and $v$ are
adjacent in $G. $ Two $\lambda $-colorings $f$ and $g$ of $G$ are distinct
if $f(x)\neq g(x)$ for some vertex $x$ in $G,$ and, the number of $\lambda$%
-colorings of $G$ is called the chromatic polynomial $P(G,\lambda).$ The
chromatic polynomial can be defined as $f\left( \lambda \right) =\underset{%
k=0}{\overset{n}{\sum}}\alpha _{k}\left( G\right) \left( \lambda \right)
_{k},$ where $\alpha_{k}\left( G\right) $ is the number of ways of
partitioning $V$ into $i$ nonempty sets. The $\sigma$-polynomial associated
to $G$ is $\underset{k=0}{\overset{n}{\sum }}\alpha_{k}\left( G\right) x^{k}=%
\underset{k=0}{\overset{n}{\sum}}\alpha_{k}\left( G\right) \left( \mathbf{B}%
_{\mathbf{x}}\right) _{k}=f\left( \mathbf{B}_{\mathbf{x}}\right).$ \newline
For more information about chromatic polynomials, see \cite{don}.

\begin{corollary}
If the $\sigma$-polynomial $f\left( \mathbf{B}_{\mathbf{x}}\right)$ of a
graph $G$ is in $\mathbf{RZ}$, then the $\sigma $-polynomial%
\begin{equation*}
\left( \mathbf{B}_{\mathbf{x}}\right) _{r_{1}}\cdots \left( \mathbf{B}_{%
\mathbf{x}}\right) _{r_{p}}f\left( \mathbf{B}_{\mathbf{x}}\right)
\end{equation*}%
of the graph $G\cup K_{r_{1}}\cup \cdots \cup K_{r_{p}}$ is in $\mathbf{RZ},$
where $K_{r}$ is the complete graph of $r$ vertices.
\end{corollary}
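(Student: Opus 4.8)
The corollary is essentially a translation of Theorem~\ref{T1} into the language of $\sigma$-polynomials, so the plan is to verify two things: that the stated hypothesis on $G$ is exactly the hypothesis of Theorem~\ref{T1}, and that the operator $\left(\mathbf{B}_{\mathbf{x}}\right)_{r_1}\cdots\left(\mathbf{B}_{\mathbf{x}}\right)_{r_p}$ applied to the $\sigma$-polynomial of $G$ produces the $\sigma$-polynomial of the disjoint union $G\cup K_{r_1}\cup\cdots\cup K_{r_p}$. Write $f(\lambda)=\sum_{k=0}^{n}\alpha_k(G)(\lambda)_k$ for the chromatic polynomial of $G$; by the definition recalled just before the statement, $f(\mathbf{B}_{\mathbf{x}})=\sum_{k}\alpha_k(G)(\mathbf{B}_{\mathbf{x}})_k=\sum_k\alpha_k(G)x^k$ is the $\sigma$-polynomial of $G$. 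So the hypothesis "$f(\mathbf{B}_{\mathbf{x}})\in\mathbf{RZ}$" of Theorem~\ref{T1} holds, and the theorem immediately gives $\left(\mathbf{B}_{\mathbf{x}}\right)_{r_1}\cdots\left(\mathbf{B}_{\mathbf{x}}\right)_{r_p}f(\mathbf{B}_{\mathbf{x}})\in\mathbf{RZ}$.

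What remains is the combinatorial identification. First I would recall that for a disjoint union of graphs the chromatic polynomial multiplies: $P(G\cup H,\lambda)=P(G,\lambda)P(H,\lambda)$, and that $P(K_r,\lambda)=(\lambda)_r$. Hence the chromatic polynomial of $G\cup K_{r_1}\cup\cdots\cup K_{r_p}$ is $f(\lambda)(\lambda)_{r_1}\cdots(\lambda)_{r_p}$. Now the key step is to pass from this product of falling factorials to the umbral operator: using the umbral rule $(\mathbf{B}_{\mathbf{x}})_{n}g(\mathbf{B}_{\mathbf{x}})=x^{n}g(\mathbf{B}_{\mathbf{x}}+n)$ recalled in the introduction, and the fact that the $\sigma$-polynomial of a graph $H$ with chromatic polynomial $\sum_k\alpha_k(\lambda)_k$ is obtained by the substitution $(\lambda)_k\mapsto x^k$, i.e. by replacing $\lambda$ by $\mathbf{B}_{\mathbf{x}}$, one checks that multiplying the chromatic polynomial by $(\lambda)_{r_j}$ corresponds exactly to applying $(\mathbf{B}_{\mathbf{x}})_{r_j}$ to the $\sigma$-polynomial. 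Iterating over $j=1,\ldots,p$ shows that $\left(\mathbf{B}_{\mathbf{x}}\right)_{r_1}\cdots\left(\mathbf{B}_{\mathbf{x}}\right)_{r_p}f(\mathbf{B}_{\mathbf{x}})$ is precisely the $\sigma$-polynomial of $G\cup K_{r_1}\cup\cdots\cup K_{r_p}$.

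Combining the two parts finishes the proof: the polynomial in question equals the $\sigma$-polynomial of the disjoint union, and by Theorem~\ref{T1} it lies in $\mathbf{RZ}$. The only mild subtlety—the step I would be most careful about—is justifying the claim that the operator $(\mathbf{B}_{\mathbf{x}})_{r}$ acting on a $\sigma$-polynomial corresponds to multiplying the underlying chromatic polynomial by $(\lambda)_r$; this is a routine consequence of linearity together with the umbral identity $(\mathbf{B}_{\mathbf{x}})_m\,(\mathbf{B}_{\mathbf{x}})_k=(\mathbf{B}_{\mathbf{x}})_m (\mathbf{B}_{\mathbf{x}})_k$ expanded via the product rule for falling factorials, but it should be spelled out so that the graph $G\cup K_{r_1}\cup\cdots\cup K_{r_p}$ is correctly matched to the displayed expression. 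No other step presents any real obstacle, since the real-rootedness is entirely inherited from Theorem~\ref{T1}.
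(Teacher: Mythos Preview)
Your proposal is correct and follows the same approach as the paper, which in fact states the corollary without an explicit proof, treating it as a direct application of Theorem~\ref{T1}. The combinatorial identification you spell out---that multiplying the chromatic polynomial by $(\lambda)_{r}$ and then passing to the $\sigma$-polynomial via $\lambda\mapsto\mathbf{B}_{\mathbf{x}}$ amounts to applying $(\mathbf{B}_{\mathbf{x}})_{r}$---is exactly the implicit justification the paper has in mind, and your argument fills in those details appropriately.
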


\begin{example}
A tree $T_{n}$ of $n$ $\left( \geq 1\right) $ vertices has $f\left( x\right)
=x\left( x-1\right) ^{n-1}.$ By the identity $\left( \mathbf{B}_{\mathbf{x}%
}\right) _{n}f\left( \mathbf{B}_{\mathbf{x}}\right)$ $=x^{n}f\left( \mathbf{B%
}_{\mathbf{x}}+n\right),$ the $\sigma $-polynomial is to be $f\left( \mathbf{%
B}_{\mathbf{x}}\right) =\mathbf{B}_{\mathbf{x}}\left( \mathbf{B}_{\mathbf{x}%
}-1\right) ^{n-1}=x\mathbf{B}_{\mathbf{x}}^{n-1}=x\mathcal{B}_{n-1}\left(
x\right)$ which is in $\mathbf{RZ}.$ Then, the $\sigma $-polynomial of the
graph $T_{n}\cup K_{r}$ is in $\mathbf{RZ}$, and is to be 
\begin{equation*}
x^{r}f\left( \mathbf{B}_{\mathbf{x}}+r\right) =x^{r}\left( \mathbf{B}_{%
\mathbf{x}}+r\right) \left( \mathbf{B}_{\mathbf{x}}+r-1\right) ^{n-1}=x^{r}%
\left[ x\mathcal{B}_{n-1,r}\left( x\right) +r\mathcal{B}_{n-1,r-1}\left(
x\right) \right].
\end{equation*}
\end{example}

\noindent For the second application, let the $\left( n,k\right) $-th
partial $r$-Bell polynomial%
\begin{equation*}
B_{n+r,k+r}^{\left( r\right) }\left( \mathbf{a};\mathbf{b}\right)
:=B_{n+r,k+r}^{\left( r\right) }\left( a_{1},a_{2},\ldots;b_{1},b_{2},\ldots
\right)
\end{equation*}
introduced by Mihoubi et al. \cite{Mihoubi2} (see also \cite{sha}) and
defined by%
\begin{equation*}
\underset{n\geq k}{\sum}B_{n+r,k+r}^{\left( r\right) }\left( \mathbf{a};%
\mathbf{b}\right) \frac{t^{n}}{n!}=\frac{1}{k!}\left( \underset{j\geq 1}{\sum%
}a_{j}\frac{t^{j}}{j!}\right) ^{k}\left( \underset{j\geq0}{\sum }b_{j+1}%
\frac{t^{j}}{j!}\right) ^{r}.
\end{equation*}

\noindent This polynomial presents an extension of the $\left( n,k\right)$%
-th partial Bell polynomial $B_{n,k}\left( a_{1},a_{2},\ldots
\right):=B_{n,k}\left(\mathbf{a}\right)$ introduced by Bell \cite{Bell} and
studied later by several authors, see for example \cite%
{Comtet,Mihoubi3,Mihoubi1}). \newline
Let $\left( a_{n}\right) $ and $\left( b_{n}\right) $ be two the sequences
of real numbers linked as follows%
\begin{equation*}
\varphi \left( t\right) =\underset{n\geq1}{\sum}a_{n}\frac{t^{n}}{n!},\  \  \
1+\varphi \left( t\right) =\underset{n\geq0}{\sum}b_{n+1}\frac{t^{n}}{n!},
\end{equation*}
Here $\mathbf{b=e}+L\mathbf{a,}$ where $\mathbf{e=}\left( 1,0,0,\ldots
\right)$, $\mathbf{a=}\left( a_{1},a_{2},\ldots \right),$ and the sequence $%
\left(L^{n}\mathbf{a}\right)$ is defined by $L^{0}\mathbf{a=}\left(
a_{1},a_{2},\ldots \right)$, $L\mathbf{a=}\left( 0,a_{1},a_{2},\ldots
\right),\ L^{2}\mathbf{a=}\left( 0,0,a_{1},a_{2},\ldots \right)$ and so on.

\begin{proposition}
\label{P5}Let $\mathcal{V}_{n,r}\left( x\right) $ and $\mathcal{V}_{n}\left(
x\right) $ be the polynomials defined by%
\begin{equation*}
\mathcal{V}_{n,r}\left( x\right) =\underset{k=0}{\overset{n}{\sum }}%
B_{n+r,k+r}^{\left( r\right) }\left( \mathbf{a};\mathbf{e}+L\mathbf{a}%
\right) x^{k},\text{ \ }\mathcal{V}_{n}\left( x\right) =\mathcal{V}%
_{n,0}\left( x\right) =\underset{k=0}{\overset{n}{\sum }}B_{n,k}\left( 
\mathbf{a}\right) x^{k}.
\end{equation*}%
If $\mathcal{V}_{n}\left( x\right) \in \mathbf{RZ},$ then $\mathcal{V}%
_{n,r}\left( x\right) \in \mathbf{RZ}.$
\end{proposition}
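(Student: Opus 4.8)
The plan is to exhibit $\mathcal{V}_{n,r}(x)$ as the image of $\mathcal{V}_{n}(x)$ under the umbral shift $\mathbf{B}_{\mathbf{x}}\mapsto \mathbf{B}_{\mathbf{x}}+r$, and then quote Theorem~\ref{T1}. First I would pass to exponential generating functions. Multiplying the defining identity of $B_{n+r,k+r}^{(r)}$ by $x^{k}$ and summing over $k$, while using $\mathbf{b}=\mathbf{e}+L\mathbf{a}$ so that $\sum_{j\geq 1}a_{j}t^{j}/j!=\varphi(t)$ and $\sum_{j\geq 0}b_{j+1}t^{j}/j!=1+\varphi(t)$, gives
\[
\sum_{n\geq 0}\mathcal{V}_{n,r}(x)\frac{t^{n}}{n!}=e^{x\varphi(t)}\bigl(1+\varphi(t)\bigr)^{r},\qquad\text{so in particular}\qquad \sum_{n\geq 0}\mathcal{V}_{n}(x)\frac{t^{n}}{n!}=e^{x\varphi(t)}.
\]

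Next I would introduce the polynomial $F_{n}(y)=\sum_{k=0}^{n}B_{n,k}(\mathbf{a})(y)_{k}$. Since $\sum_{n\geq k}B_{n,k}(\mathbf{a})t^{n}/n!=\varphi(t)^{k}/k!$ and $\sum_{k\geq 0}(y)_{k}z^{k}/k!=(1+z)^{y}$, one gets $\sum_{n\geq 0}F_{n}(y)t^{n}/n!=(1+\varphi(t))^{y}$. Using $(\mathbf{B}_{\mathbf{x}})_{k}=x^{k}$ it follows that $F_{n}(\mathbf{B}_{\mathbf{x}})=\sum_{k}B_{n,k}(\mathbf{a})(\mathbf{B}_{\mathbf{x}})_{k}=\sum_{k}B_{n,k}(\mathbf{a})x^{k}=\mathcal{V}_{n}(x)$. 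Moreover, applying the rule $g(\mathbf{B}_{\mathbf{x}})=e^{-x}\sum_{k\geq 0}g(k)x^{k}/k!$ to $g(y)=F_{n}(y+r)$ yields
\[
\sum_{n\geq 0}F_{n}(\mathbf{B}_{\mathbf{x}}+r)\frac{t^{n}}{n!}=e^{-x}\sum_{k\geq 0}\bigl(1+\varphi(t)\bigr)^{k+r}\frac{x^{k}}{k!}=\bigl(1+\varphi(t)\bigr)^{r}e^{x\varphi(t)}=\sum_{n\geq 0}\mathcal{V}_{n,r}(x)\frac{t^{n}}{n!},
\]
so that $F_{n}(\mathbf{B}_{\mathbf{x}}+r)=\mathcal{V}_{n,r}(x)$; equivalently, by the identity $(\mathbf{B}_{\mathbf{x}})_{r}f(\mathbf{B}_{\mathbf{x}})=x^{r}f(\mathbf{B}_{\mathbf{x}}+r)$ recalled in the introduction, $(\mathbf{B}_{\mathbf{x}})_{r}F_{n}(\mathbf{B}_{\mathbf{x}})=x^{r}\mathcal{V}_{n,r}(x)$.

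Finally I would invoke Theorem~\ref{T1} with $q=1$, $r_{1}=r$, and $f=F_{n}$: the hypothesis $\mathcal{V}_{n}(x)=F_{n}(\mathbf{B}_{\mathbf{x}})\in \mathbf{RZ}$ gives $(\mathbf{B}_{\mathbf{x}})_{r}F_{n}(\mathbf{B}_{\mathbf{x}})=x^{r}\mathcal{V}_{n,r}(x)\in \mathbf{RZ}$, and since a nonzero polynomial factor of an element of $\mathbf{RZ}$ is again in $\mathbf{RZ}$, we conclude $\mathcal{V}_{n,r}(x)\in \mathbf{RZ}$. The only real work is the generating-function bookkeeping of the second paragraph — correctly guessing $F_{n}$ and checking $F_{n}(\mathbf{B}_{\mathbf{x}}+r)=\mathcal{V}_{n,r}(x)$; once this umbral translation is in place, Theorem~\ref{T1} finishes the argument immediately and no new real-rootedness input is needed. (As a cross-check, the same generating functions show $\mathcal{V}_{n,r}=\sum_{j=0}^{r}\binom{r}{j}\mathcal{V}_{n}^{(j)}=(1+\tfrac{d}{dx})^{r}\mathcal{V}_{n}$, which also lands in $\mathbf{RZ}$ by the classical fact that $p\mapsto p+p'$ preserves real-rootedness; this is essentially the content of Theorem~\ref{T1} read through the same Rolle argument.)
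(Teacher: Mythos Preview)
Your proof is correct and follows essentially the same route as the paper: define $F_n(y)=\sum_k B_{n,k}(\mathbf{a})(y)_k$, verify $F_n(\mathbf{B}_{\mathbf{x}})=\mathcal{V}_n(x)$ and $F_n(\mathbf{B}_{\mathbf{x}}+r)=\mathcal{V}_{n,r}(x)$ via generating functions, then apply Theorem~\ref{T1}. The only cosmetic difference is that the paper computes $\sum_n F_n(\mathbf{B}_{\mathbf{x}}+r)t^n/n!$ by writing it directly as the umbral binomial $(1+\varphi(t))^{\mathbf{B}_{\mathbf{x}}+r}$, whereas you pass through Dobinski's formula; your extra remark that $\mathcal{V}_{n,r}=(1+d/dx)^r\mathcal{V}_n$ is a pleasant bonus not present in the paper.
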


\begin{proof}
Then, from \cite[Th. 4]{Mihoubi2} we have%
\begin{align*}
\underset{n\geq0}{\sum}B_{n+r,k+r}^{\left( r\right) }\left( \mathbf{a};%
\mathbf{e}+L\mathbf{a}\right) \frac{t^{n}}{n!} & =\frac{1}{k!}\left( \varphi
\left( t\right) \right) ^{k}\left( 1+\varphi \left( t\right) \right) ^{r}, \\
\underset{n\geq0}{\sum}\mathcal{V}_{n,r}\left( x\right) \frac{t^{n}}{n!} &
=\left( 1+\varphi \left( t\right) \right) ^{r}\exp \left( x\varphi \left(
t\right) \right).
\end{align*}
For $f_{n}\left( x\right) =\underset{k=0}{\overset{n}{\sum}}B_{n,k}\left( 
\mathbf{a}\right) \left( x\right) _{k}$ we get $f_{n}\left( \mathbf{B}_{%
\mathbf{x}}+r\right) =\mathcal{V}_{n,r}\left( x\right).$ Indeed, we have%
\begin{align*}
\underset{n\geq0}{\sum}f_{n}\left( \mathbf{B}_{\mathbf{x}}+r\right) \frac{%
t^{n}}{n!} & =\underset{k\geq0}{\sum}\left( \mathbf{B}_{\mathbf{x}}+r\right)
_{k}\underset{n\geq k}{\sum}B_{n,k}\left( \mathbf{a}\right) \frac{t^{n}}{n!}
\\
& =\underset{k\geq0}{\sum}\binom{\mathbf{B}_{\mathbf{x}}+r}{k}\left( \varphi
\left( t\right) \right) ^{k} \\
& =\left( 1+\varphi \left( t\right) \right) ^{\mathbf{B}_{\mathbf{x}}+r} \\
& =\left( 1+\varphi \left( t\right) \right) ^{r}\underset{n\geq0}{\sum }%
f_{n}\left( \mathbf{B}_{\mathbf{x}}\right) \frac{t^{n}}{n!} \\
& =\left( 1+\varphi \left( t\right) \right) ^{r}\exp \left( x\varphi \left(
t\right) \right),
\end{align*}
which is the exponential generating function of the sequence $\left (%
\mathcal{V} _{n,r}\left( x\right)\right)$. \newline
Hence, the application of Theorem \ref{T1} completes the proof.
\end{proof}

\begin{corollary}
For $\mathbf{a=}\left( 1,1,1,\ldots \right),$ the following polynomials are
in $\mathbf{RZ}$ 
\begin{equation*}
\mathcal{V}_{n,r}\left( x\right) =\sum_{k=0}^{n}B_{n+r,k+r}^{\left( r\right)
}\left( L\mathbf{a};\mathbf{e}+L^{2}\mathbf{a}\right) x^{k},\  \  \  \mathcal{U}%
_{n,r}\left( x\right) =\sum_{k=0}^{n}B_{n+r,k+r}^{\left( r\right) }\left(
L^{2}\mathbf{a};\mathbf{e}+L^{3}\mathbf{a}\right) x^{k}
\end{equation*}
\end{corollary}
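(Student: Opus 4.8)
The plan is to read these two families off as instances of Proposition~\ref{P5} for shifted coefficient sequences and then to settle the corresponding base cases. Write $\mathbf{a}=(1,1,1,\ldots)$, so that $L\mathbf{a}=(0,1,1,\ldots)$ and $L^{2}\mathbf{a}=(0,0,1,1,\ldots)$. Applying Proposition~\ref{P5} with $\mathbf{a}$ replaced by $L\mathbf{a}$ one gets $\varphi(t)=\sum_{n\geq 2}t^{n}/n!=e^{t}-1-t$ and $\mathbf{e}+L(L\mathbf{a})=\mathbf{e}+L^{2}\mathbf{a}$, so the polynomial that proposition produces is precisely $\mathcal{V}_{n,r}(x)$; replacing $\mathbf{a}$ by $L^{2}\mathbf{a}$ gives $\varphi(t)=\sum_{n\geq 3}t^{n}/n!=e^{t}-1-t-t^{2}/2$, $\mathbf{e}+L^{3}\mathbf{a}$, and the polynomial $\mathcal{U}_{n,r}(x)$. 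Hence, by Proposition~\ref{P5}, it is enough to prove the two $r=0$ statements: the polynomials $\mathcal{V}_{n}(x)=\sum_{k}B_{n,k}(L\mathbf{a})x^{k}$ and $\mathcal{U}_{n}(x)=\sum_{k}B_{n,k}(L^{2}\mathbf{a})x^{k}$, whose exponential generating functions are $\exp\!\big(x(e^{t}-1-t)\big)$ and $\exp\!\big(x(e^{t}-1-t-t^{2}/2)\big)$, lie in $\mathbf{RZ}$.

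For the base cases I would derive a Harper-type recurrence. More generally, for $\phi_{m}(t)=e^{t}-\sum_{j=0}^{m}t^{j}/j!$ and $P_{n}^{(m)}(x)=n!\,[t^{n}]\exp(x\phi_{m}(t))$, differentiating the generating function in $t$ and in $x$ and using $\phi_{m}'(t)=\phi_{m-1}(t)=\phi_{m}(t)+t^{m}/m!$ yields
\[
P_{n+1}^{(m)}(x)=x\,\tfrac{d}{dx}P_{n}^{(m)}(x)+\binom{n}{m}x\,P_{n-m}^{(m)}(x).
\]
For $m=0$ this is the classical recurrence $\mathcal{B}_{n+1}(x)=x(\mathcal{B}_{n}(x)+\mathcal{B}_{n}'(x))$ behind Harper's proof that $\mathcal{B}_{n}(x)\in\mathbf{RZ}$; for $m=1$ it reads $\mathcal{V}_{n+1}(x)=x\mathcal{V}_{n}'(x)+nx\,\mathcal{V}_{n-1}(x)$, and for $m=2$ it reads $\mathcal{U}_{n+1}(x)=x\mathcal{U}_{n}'(x)+\binom{n}{2}x\,\mathcal{U}_{n-2}(x)$.

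Then I would run an induction on $n$ (after checking the initial terms, e.g. $\mathcal{V}_{1}=0$, $\mathcal{V}_{2}=\mathcal{V}_{3}=x$, $\mathcal{V}_{4}=x(1+3x)$, and the analogous small $\mathcal{U}$'s), carrying an inductive hypothesis that the last $m+1$ polynomials lie in $\mathbf{RZ}$, have only nonpositive zeros, and satisfy the appropriate interlacing relations. The operator $V\mapsto xV'$ sends such a polynomial to one of the same degree that is again in $\mathbf{RZ}$ with nonpositive zeros (Rolle's theorem plus the extra factor $x$); the summand $\binom{n}{m}x\,P_{n-m}^{(m)}$ is trivially in $\mathbf{RZ}$; and, provided the two summands have a common interlacer, their sum is in $\mathbf{RZ}$. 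One also tracks degrees ($\deg P_{n}^{(m)}=\lfloor n/(m+1)\rfloor$) so as to control leading terms. The delicate point — and the main obstacle — is the bookkeeping of which interlacings to propagate: for $m=1$ the pair $\{\mathcal{V}_{n-1},\mathcal{V}_{n}\}$ with interlacing zeros is enough, but the gap in the $m=2$ recurrence forces one to carry $\{\mathcal{U}_{n-2},\mathcal{U}_{n-1},\mathcal{U}_{n}\}$ and to appeal to the theory of compatible (equivalently, commonly interlaced) polynomials to close the induction.

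Finally, I would emphasize why this base case genuinely needs the above structure and cannot be obtained by a soft argument: real-rootedness of $n!\,[t^{n}]\exp(x\phi(t))$ fails for a general $\phi$ with nonnegative Taylor coefficients — already $\exp(x(t+t^{3}/6))$ produces the coefficient polynomial $x+x^{3}=x(1+x^{2})$ — so one must use that $e^{t}-1-t$ and $e^{t}-1-t-t^{2}/2$ are iterated primitives of $e^{t}-1$ vanishing to orders $2$ and $3$ at the origin, which is exactly what makes the Harper-type induction go through. (If a real-rootedness result for generating polynomials of set partitions with restricted block sizes is available in the literature, it could be cited here in place of the explicit induction.)
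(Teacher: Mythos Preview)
Your reduction is exactly the paper's: apply Proposition~\ref{P5} with the coefficient sequence taken to be $L\mathbf{a}$ (respectively $L^{2}\mathbf{a}$), so that the $r=0$ cases are the $2$-associated and $3$-associated Bell polynomials $\sum_{k}{n\brace k}^{(2)}x^{k}$ and $\sum_{k}{n\brace k}^{(3)}x^{k}$. Where you diverge is in the treatment of those base cases: the paper does not prove them but simply cites B\'ona--Mez\H{o} \cite{Bona} and Tebtoub \cite{Tebtoub} for the real-rootedness of the $2$- and $3$-associated Bell polynomials --- precisely the option you mention in your closing parenthetical. Your Harper-type recurrence $P_{n+1}^{(m)}=x\frac{d}{dx}P_{n}^{(m)}+\binom{n}{m}xP_{n-m}^{(m)}$ is correct and is indeed the engine behind those cited results, but as you yourself flag, the interlacing bookkeeping for $m=2$ is not carried out here; so as written your base case is a sketch, whereas the paper's is a citation. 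Either way the corollary follows, and the overall architecture matches the paper.
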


\begin{proof}
The $2$-associated and $3$-associated Bell polynomials%
\begin{equation*}
\mathcal{V}_{n}\left( x\right) =\sum_{k=0}^{n}{n \brace k}^{\left( 2\right)
}x^{k}=\sum_{k=0}^{n}B_{n,k}\left( L\mathbf{a}\right) x^{k},\text{ \ }%
\mathcal{U}_{n}\left( x\right) =\underset{k=0}{\overset{n}{\sum }}{n \brace k%
}^{\left( 3\right) }x^{k}=\sum_{k=0}^{n}B_{n,k}\left( L^{2}\mathbf{a}\right)
x^{k}
\end{equation*}%
are in $\mathbf{RZ},$ where%
\begin{equation*}
\underset{n\geq k}{\sum }{n \brace k}^{\left( 2\right) }\frac{t^{n}}{n!}=%
\frac{1}{k!}\left( e^{t}-1-t\right) ^{k},\  \  \  \underset{n\geq k}{\sum }{n %
\brace k}^{\left( 3\right) }\frac{t^{n}}{n!}=\frac{1}{k!}\left( e^{t}-1-t-%
\frac{t^{2}}{2}\right) ^{k},
\end{equation*}%
see \cite{Bona,Tebtoub}. So, the corollary follows from Proposition \ref{P5}.
\end{proof}

\section{Exponential polynomials with real zeros}

The following proposition presents an application of Theorem 1 given in \cite%
{bender}.

\begin{theorem}
\label{T3}Let $\left( A_{n}\left( x\right) \right) $ be a sequence of
polynomials defined by%
\begin{equation*}
1+\underset{n\geq1}{\sum}A_{n}\left( x\right) \frac{t^{n}}{n!}=\exp \left(
xh\left( t\right) \right),\  \ h\left( t\right) =\underset{j\geq1}{\sum }a_{j}%
\frac{t^{j}}{j!},
\end{equation*}
and let $\left( A_{n}^{\left( s\right) }\left( x\right) \right) $ be a
sequence of polynomials defined by%
\begin{equation*}
A_{n}^{\left( 0\right) }\left( x\right) =A_{n}\left( x\right),\  \
A_{n}^{\left( s\right) }\left( x\right) =A_{n}^{\left( s-1\right) }\left( 
\mathbf{B}_{\mathbf{x}}\right),\text{ \ }s\geq1.
\end{equation*}
Then%
\begin{equation*}
1+\underset{n\geq1}{\sum}A_{n}^{\left( s\right) }\left( x\right) \dfrac{t^{n}%
}{n!}=\exp \left( x\underset{j\geq1}{\sum}A_{j}^{\left( s-1\right) }\left(
1\right) \dfrac{t^{j}}{j!}\right),\text{ \ }s\geq1.
\end{equation*}
Furthermore, if the sequence $\left( \frac{A_{n}^{\left( s\right) }\left(
1\right) }{\left( n-1\right) !}\right) $ is log-concave, then for $x>0,$ the
sequence $\left( A_{n}^{\left( s\right) }\left( x\right) \right) $ is
log-convex and the sequence $\left( \frac{A_{n}^{\left( s\right) }\left(
x\right) }{n!}\right) $ is log-concave.
\end{theorem}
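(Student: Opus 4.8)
The plan is to establish the generating-function identity by induction on $s$, and then to read off the log-convexity/log-concavity statement from Theorem~1 of \cite{bender}. For the first part, write $H_{s-1}$ for the exponent in $1+\sum_{n\ge1}A_{n}^{(s-1)}(x)\frac{t^{n}}{n!}=\exp(xH_{s-1}(t))$; this is $h$ when $s=1$ and is provided by the induction hypothesis when $s\ge2$. Using the representation $f(\mathbf{B}_{\mathbf{x}})=e^{-x}\sum_{j\ge0}f(j)\frac{x^{j}}{j!}$ of \cite{Benyattou} (already invoked in the proof of Theorem~\ref{T1}) with $f=A_{n}^{(s-1)}$ and summing against $\frac{t^{n}}{n!}$, one gets
\[
1+\sum_{n\ge1}A_{n}^{(s)}(x)\frac{t^{n}}{n!}=e^{-x}\sum_{j\ge0}\frac{x^{j}}{j!}\Bigl(1+\sum_{n\ge1}A_{n}^{(s-1)}(j)\frac{t^{n}}{n!}\Bigr)=e^{-x}\sum_{j\ge0}\frac{x^{j}}{j!}\exp\bigl(jH_{s-1}(t)\bigr),
\]
the last equality being the defining relation of $(A_{n}^{(s-1)}(x))$ — the induction hypothesis when $s\ge2$, the definition when $s=1$ — specialised to $x=j$.

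Next the sum in $j$ collapses: $\sum_{j\ge0}\frac{x^{j}}{j!}\exp(jH_{s-1}(t))=\sum_{j\ge0}\frac{(xe^{H_{s-1}(t)})^{j}}{j!}=\exp\bigl(xe^{H_{s-1}(t)}\bigr)$, so the right-hand side equals $\exp\bigl(x(e^{H_{s-1}(t)}-1)\bigr)$. Putting $x=1$ in the same defining relation gives $e^{H_{s-1}(t)}-1=\sum_{j\ge1}A_{j}^{(s-1)}(1)\frac{t^{j}}{j!}$, which is exactly the series appearing in the statement; this closes the induction (the case $s=1$ being the same computation with $A_{n}^{(0)}=A_{n}$ and $e^{h(t)}-1=\sum_{j\ge1}A_{j}(1)\frac{t^{j}}{j!}$). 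All the rearrangements are legitimate coefficient-wise in $t$: since $h$, hence each $H_{r}$, has zero constant term, $[t^{n}]\exp(jH_{s-1}(t))$ is a polynomial in $j$ of degree $\le n$, so each $[t^{n}]$ of the series in $j$ is $e^{x}$ times a polynomial in $x$, and interchanging $\sum_{j}$ with $[t^{n}]$ is justified.

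For the second assertion, set $u_{n}:=A_{n}^{(s)}(1)$. Taking $x=1$ in the identity just proved gives $e^{H_{s}(t)}=1+\sum_{n\ge1}u_{n}\frac{t^{n}}{n!}$, whence for every $x$
\[
1+\sum_{n\ge1}A_{n}^{(s)}(x)\frac{t^{n}}{n!}=\exp\bigl(xH_{s}(t)\bigr)=\Bigl(1+\sum_{n\ge1}u_{n}\frac{t^{n}}{n!}\Bigr)^{x}.
\]
The standing hypothesis that $(u_{n}/(n-1)!)=(A_{n}^{(s)}(1)/(n-1)!)$ is log-concave (together with the implicit nonnegativity of the $a_{j}$, hence of the $u_{n}$) is precisely the hypothesis under which Theorem~1 of \cite{bender} controls the coefficients of such a family $f(t)^{x}$: it yields, for $x>0$, that $\bigl(n!\,[t^{n}]f(t)^{x}\bigr)=(A_{n}^{(s)}(x))$ is log-convex while $\bigl([t^{n}]f(t)^{x}\bigr)=(A_{n}^{(s)}(x)/n!)$ is log-concave, which is the claim.

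The first part is routine once the umbral representation is on the table; the substance of the second part is carried entirely by Theorem~1 of \cite{bender}, which is used here as a black box, so the only genuine ``obstacle'' is of a bookkeeping nature — matching the hypothesis $(A_{n}^{(s)}(1)/(n-1)!)$ log-concave to the exact form of that theorem and keeping track of positivity so that ``log-convex'' and ``log-concave'' are meaningful. Were a self-contained argument wanted, one would have to re-derive the Bender–Canfield-type inequalities for the coefficients of $f(t)^{x}$, for instance through total positivity of the associated coefficient matrices or through Newton-type inequalities, which is the real work.
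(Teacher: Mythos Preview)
Your proof is correct and follows essentially the same route as the paper's: induction on $s$ for the generating-function identity, then an appeal to Theorem~1 of \cite{bender} for the log-convexity/log-concavity claims. The only cosmetic difference is that you unpack the umbral substitution via the Dobinski representation $f(\mathbf{B}_{\mathbf{x}})=e^{-x}\sum_{j\ge0}f(j)\frac{x^{j}}{j!}$, whereas the paper writes $\exp(\mathbf{B}_{\mathbf{x}}H(t))=\sum_{n\ge0}\frac{\mathcal{B}_{n}(x)}{n!}H(t)^{n}$ and then invokes the Bell-polynomial generating function $\sum_{n}\mathcal{B}_{n}(x)\frac{u^{n}}{n!}=\exp(x(e^{u}-1))$; these two computations are equivalent.
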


\begin{proof}
The desired identity is true for $s=1$ because we have%
\begin{align*}
\underset{n\geq 0}{\sum }A_{n}^{\left( 1\right) }\left( x\right) \dfrac{t^{n}%
}{n!}& =\underset{n\geq 0}{\sum }A_{n}^{\left( 0\right) }\left( \mathbf{B}_{%
\mathbf{x}}\right) \dfrac{t^{n}}{n!} \\
& =\exp \left( \mathbf{B}_{\mathbf{x}}h\left( t\right) \right)  \\
& =\underset{n\geq 0}{\sum }\dfrac{\mathcal{B}_{n}\left( x\right) }{n!}%
\left( h\left( t\right) \right) ^{n} \\
& =\exp \left( x\left( \exp \left( h\left( t\right) \right) -1\right)
\right)  \\
& =\exp \left( x\underset{n\geq 1}{\sum }A_{n}^{\left( 0\right) }\left(
1\right) \frac{t^{n}}{n!}\right) .
\end{align*}%
Assume it is true for $s,$ $s\geq 1$. Then%
\begin{align*}
\underset{n\geq 0}{\sum }A_{n}^{\left( s+1\right) }\left( x\right) \dfrac{%
t^{n}}{n!}& =\underset{n\geq 0}{\sum }A_{n}^{\left( s\right) }\left( \mathbf{%
B}_{\mathbf{x}}\right) \dfrac{t^{n}}{n!} \\
& =\exp \left( \mathbf{B}_{\mathbf{x}}\underset{j\geq 1}{\sum }A_{j}^{\left(
s-1\right) }\left( 1\right) \dfrac{t^{j}}{j!}\right)  \\
& =\underset{n\geq 0}{\sum }\dfrac{\mathcal{B}_{n}\left( x\right) }{n!}%
\left( \underset{j\geq 1}{\sum }A_{j}^{\left( s-1\right) }\left( 1\right) 
\frac{t^{j}}{j!}\right) ^{n} \\
& =\exp \left( x\left( \exp \left( \underset{j\geq 1}{\sum }A_{j}^{\left(
s-1\right) }\left( 1\right) \frac{t^{j}}{j!}\right) -1\right) \right)  \\
& =\exp \left( x\underset{n\geq 1}{\sum }A_{n}^{\left( s\right) }\left(
x\right) \dfrac{t^{n}}{n!}\right) ,
\end{align*}%
which proves the induction step. To rest of the proof can be obtained by
induction on $s$ upon using Theorem 1 given by Bender-Canfield \cite{bender}.
\end{proof}

\begin{example}
For $a_{n}=1,$ we get Theorem 1 given in \cite[Th. 1]{Asai}.
\end{example}

\begin{example}
For $a_{n}=\left( n-1\right) !,\ n\geq 1,$ we get $h\left( t\right) =-\ln
\left( 1-t\right) $ and%
\begin{align*}
A_{n}^{\left( 0\right) }\left( x\right) & =x\left( x+1\right) \cdots \left(
x+n-1\right) :=\left \langle x\right \rangle _{n}\text{ with }\left \langle
x\right \rangle _{0}=1, \\
A_{n}^{\left( 1\right) }\left( x\right) & =A_{n}\left( \mathbf{B}_{\mathbf{x}%
}\right) =\left( \mathbf{B}_{\mathbf{x}}+n-1\right) _{n}=\mathcal{L}%
_{n}\left( x\right) , \\
A_{n}^{\left( 2\right) }\left( x\right) & =A_{n}^{\left( 2\right) }\left( 
\mathbf{B}_{\mathbf{x}}\right) =\mathcal{L}_{n}\left( \mathbf{B}_{\mathbf{x}%
}\right) =\underset{k=0}{\overset{n}{\sum }}L(n,k)\mathcal{B}_{k}\left(
x\right) ,\text{ etc},
\end{align*}%
where $L(n,k)$ are the Lah numbers. Theorem \ref{T3} shows that the
sequences $\left( \left \langle x\right \rangle _{n}\right) $, $\left( 
\mathcal{L}_{n}\left( x\right) \right) $ and $\left( \underset{k=0}{\overset{%
n}{\sum }}L(n,k)\mathcal{B}_{k}\left( x\right) \right) $ are log-convex and
the sequences $\left( \frac{\left \langle x\right \rangle _{n}}{n!}\right) ,$ $%
\left( \frac{\mathcal{L}_{n}\left( x\right) }{n!}\right) $ and $\left( 
\underset{k=0}{\overset{n}{\sum }}\frac{L(n,k)}{n!}\mathcal{B}_{k}\left(
x\right) \right) $ are log-concave.
\end{example}

\begin{remark}
Let $\mathbf{a}_{s}\mathbf{=}\left( A_{1}^{\left( s\right) }\left( 1\right)
,A_{2}^{\left( s\right) }\left( 1\right) ,\ldots \right) $ and let the
polynomials%
\begin{equation*}
\mathcal{V}_{n,r}^{\left( s\right) }\left( x\right) =\underset{k=0}{\overset{%
n}{\sum }}B_{n+r,k+r}^{\left( r\right) }\left( \mathbf{a}_{s};\mathbf{e}+L%
\mathbf{a}_{s}\right) x^{k},\  \ s\geq 0.
\end{equation*}%
If $\mathcal{V}_{n,0}^{\left( s\right) }\left( x\right) $, then $\mathcal{V}%
_{n,r}^{\left( s\right) }\left( x\right) \in \mathbf{RZ}.$
\end{remark}

\begin{example}
For $a_{n}=\left( n-1\right) !,\ n\geq 1,$ we get $h\left( t\right) =-\ln
\left( 1-t\right) $ and%
\begin{align*}
A_{n}^{\left( 0\right) }\left( x\right) & =x\left( x+1\right) \cdots \left(
x+n-1\right) :=\left \langle x\right \rangle _{n}, \\
A_{n}^{\left( 1\right) }\left( x\right) & =A_{n}\left( \mathbf{B}_{\mathbf{x}%
}\right) =\left( \mathbf{B}_{\mathbf{x}}+n-1\right) _{n}=\mathcal{L}%
_{n}\left( x\right),
\end{align*}%
are in $\mathbf{RZ},$ then the polynomials%
\begin{equation*}
\mathcal{V}_{n,r}^{\left( 0\right) }\left( x\right) =\underset{k=0}{\overset{%
n}{\sum }}B_{n+r,k+r}^{\left( r\right) }\left( \mathbf{a}_{0};\mathbf{e}+L%
\mathbf{a}_{0}\right) x^{k},\  \  \mathcal{V}_{n,r}^{\left( 1\right) }\left(
x\right) =\underset{k=0}{\overset{n}{\sum }}B_{n+r,k+r}^{\left( r\right)
}\left( \mathbf{a}_{1};\mathbf{e}+L\mathbf{a}_{1}\right) x^{k},
\end{equation*}%
are also in $\mathbf{RZ}$, where $\mathbf{a}_{0}=(1!,2!,\ldots)$ and $%
\mathbf{a}_{1}=(\mathcal{L}_{1}\left( 1\right),\mathcal{L}_{2}\left(
1\right),\ldots)$.
\end{example}

\begin{theorem}
Let $r$ be a non-negative integer and $\left( f_{n}^{\left( r\right) }\left(
x\right) \right) $ be the sequence defined by%
\begin{equation*}
\underset{n\geq 0}{\sum }f_{n}^{\left( r\right) }\left( x\right) \dfrac{t^{n}%
}{n!}=F\left( t\right) \left( h\left( t\right) \right) ^{r}\exp \left(
xh\left( t\right) \right), \  \ h\left( t\right) =\underset{j\geq 1}{\sum }%
a_{j}\frac{t^{j}}{j!},
\end{equation*}%
for some power series $F.$ Then, for $r\leq n-1,$ if the polynomial $%
f_{n}^{\left( 0\right) }\left( x\right) $ is of degree $n$ and is in $%
\mathbf{RZ},$ then 
\begin{equation*}
f_{n}^{\left( r\right) }\left( x\right) =r!\overset{n}{\underset{k=r}{\sum }}%
\binom{n}{k}B_{k,r}\left( \mathbf{a}\right) f_{n-k}^{\left( 0\right) }\left(
x\right) \in \mathbf{RZ}.
\end{equation*}
\end{theorem}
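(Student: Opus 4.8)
The plan is to reduce the statement to the elementary fact, already exploited in the proof of Theorem~\ref{T1}, that by Rolle's theorem the derivative of a real-rooted polynomial of degree $n$ is again real-rooted, of degree $n-1$.

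First I would record the closed form. Setting $r=0$ in the defining relation gives $\sum_{m\geq 0} f_m^{(0)}(x)\,t^m/m! = F(t)\exp\left(xh(t)\right)$, while the generating function of the partial Bell polynomials (the $r=0$ case of the one recalled before Proposition~\ref{P5}) yields $\left(h(t)\right)^r = r!\sum_{k\geq r} B_{k,r}(\mathbf{a})\,t^k/k!$. Multiplying the two series and extracting the coefficient of $t^n/n!$ in $F(t)\left(h(t)\right)^r\exp\left(xh(t)\right)$ produces
\[
f_n^{(r)}(x)=r!\sum_{k=r}^{n}\binom{n}{k}B_{k,r}(\mathbf{a})\,f_{n-k}^{(0)}(x),
\]
which is the asserted formula.

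The key observation is then that differentiating the identity $\sum_{n\geq 0} f_n^{(0)}(x)\,t^n/n! = F(t)\exp\left(xh(t)\right)$ exactly $r$ times with respect to $x$ multiplies the right-hand side by $\left(h(t)\right)^r$, i.e.\ it reproduces precisely the generating function of $\left(f_n^{(r)}(x)\right)$. Hence $f_n^{(r)}(x)=\dfrac{d^r}{dx^r}f_n^{(0)}(x)$. Since, by hypothesis, $f_n^{(0)}$ has degree $n$ and lies in $\mathbf{RZ}$, Rolle's theorem (exactly as in the proof of Theorem~\ref{T1}) shows that $\frac{d}{dx}f_n^{(0)}$ is a polynomial of degree $n-1$ with $n-1$ real zeros, hence in $\mathbf{RZ}$; iterating this $r\leq n-1$ times shows that $f_n^{(r)}$ is a polynomial of degree $n-r\geq 1$ with only real zeros. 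Together with the displayed identity this finishes the proof.

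There is essentially no hard step here: the only point that needs attention is that the hypothesis $\deg f_n^{(0)}=n$ together with $r\leq n-1$ is exactly what guarantees that every successive $x$-derivative is a genuine non-constant polynomial whose full complement of zeros stays real, so that Rolle's theorem can be applied cleanly at each of the $r$ stages; without the degree hypothesis a derivative could vanish identically or lose the required number of zeros and the induction would break down.
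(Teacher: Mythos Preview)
Your proof is correct and follows essentially the same route as the paper: both identify $f_n^{(r)}(x)=\dfrac{d^r}{dx^r}f_n^{(0)}(x)$ from the generating function and then iterate Rolle's theorem, with the degree hypothesis ensuring each derivative is non-constant. You additionally spell out the derivation of the closed-form convolution formula, which the paper simply asserts.
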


\begin{proof}
From the definition of the sequence $\left( f_{n}^{\left( r\right) }\left(
x\right) \right) $ there holds $f_{n}^{\left( r\right) }\left( x\right) =%
\frac{d}{dx}f_{n}^{\left( r-1\right) }\left( x\right) .$ It follows that the
polynomial $f_{n}^{\left( r\right) }\left( x\right) $ is of degree $n-r.$
The proof can be deduced by induction on $r$ and by application of Rolle's
theorem.
\end{proof}

\begin{example}
Let $\left( f_{n}^{\left( r\right) }\left( x\right) \right) $ be the
sequence defined by%
\begin{equation*}
\underset{n\geq 0}{\sum }f_{n}^{\left( r\right) }\left( x\right) \dfrac{t^{n}%
}{n!}=\left( e^{t}-1\right) ^{r}\exp \left( x\left( e^{t}-1\right) \right).
\end{equation*}%
Then, the polynomial $f_{n}^{\left( r\right) }\left( x\right) =r!\overset{n}{%
\underset{k=r}{\sum }}\binom{n}{k} {n \brace k} \mathcal{B}_{n-k}\left(
x\right) $ is in $\mathbf{RZ}.$
\end{example}

\begin{example}
Let $\left( f_{n}^{\left( r\right) }\left( x\right) \right) $ be the
sequence defined by%
\begin{equation*}
\underset{n\geq 0}{\sum }f_{n}^{\left( r\right) }\left( x\right) \dfrac{t^{n}%
}{n!}=\left( \ln \left( 1+t\right) \right) ^{r}\exp \left( x\left( \ln
\left( 1+t\right) \right) \right).
\end{equation*}%
Then, the polynomial $f_{n}^{\left( r\right) }\left( x\right) =r!\overset{n}{%
\underset{k=r}{\sum }}\left( -1\right) ^{k-r}\binom{n}{k}{k \brack r }\left(
x\right) _{n-k}$ is in $\mathbf{RZ}.$
\end{example}

\begin{example}
Let $\left( f_{n}^{\left( r\right) }\left( x\right) \right) $ be the
sequence defined by%
\begin{equation*}
\underset{n\geq 0}{\sum }f_{n}^{\left( r\right) }\left( x\right) \dfrac{t^{n}%
}{n!}=\frac{1}{r!}\left( \frac{t}{1-t}\right) ^{r}\exp \left( \frac{xt}{1-t}%
\right).
\end{equation*}%
Then, the polynomial $f_{n}^{\left( r\right) }\left( x\right) =r!\overset{n}{%
\underset{k=r}{\sum }}\left( -1\right) ^{k-r}\binom{n}{k}L(k,r)\mathcal{L}%
_{n-k}\left( x\right) $ is in $\mathbf{RZ}$.
\end{example}

\end{document}